\newtheorem{theorem}{Theorem}[section]
\newtheorem{lemma}[theorem]{Lemma}
\theoremstyle{definition}
\newtheorem{claim}{Claim}
\theoremstyle{remark}
\numberwithin{equation}{section}
\newcommand{\D}{\mathbb{D}}
\newcommand{\cD}{\overline{\D}}
\newcommand{\C}{\mathbb{C}}
\newcommand{\T}{\mathbb{T}}
\newcommand{\Z}{\mathbb{Z}}
\newcommand{\al}{\alpha}
\newcommand{\mbf}[1]{\mathbf{#1}}
\title{Schur-Agler class rational inner functions on the tridisk}
\author{Greg Knese}
\address{University of Alabama, Tuscaloosa, AL, 35487-0350}
\date{\today}
\email{geknese@bama.ua.edu}
\keywords{}
\thanks{This research was supported by NSF grant DMS-1048775}
\subjclass{Primary 47A57; Secondary 42B05}
\begin{document}
\bibliographystyle{apalike}
\maketitle

\begin{abstract} 
We prove two results with regard to rational inner functions in the
Schur-Agler class of the tridisk.  Every rational inner function of
degree $(n,1,1)$ is in the Schur-Agler class, and every rational inner
function of degree $(n,m,1)$ is in the Schur-Agler class after
multiplication by a monomial of sufficiently high degree.
\end{abstract} 

\section{Prologue}
In this article, we continue the study of the Schur-Agler class of the
polydisk by focusing on rational inner functions.  The
\emph{Schur-Agler class} appears naturally in operator theory as the
class of holomorphic functions $f: \D^n \to \D$ which satisfy the von
Neumann inequality; i.e. for all commuting $n$-tuples of strict
contractions $(T_1,\dots,T_n)$ on some separable Hilbert space, we
have
\[
||f(T_1,\dots,T_n)|| \leq 1.
\]
The \emph{Schur class} simply refers to the holomorphic functions
$f:\D^n \to \D$.  Our general motivating question is this:
\begin{quote} How does the Schur-Agler class fit inside the Schur
  class?
\end{quote}
For $n=1,2$ these two classes coincide, but they differ for $n \geq
3$, and this is not well understood.  More recent efforts in this area
have focused on generalizations and properties of the Schur-Agler
class.  See \cite{ADR08}, \cite{BB02}, \cite{BB10}. For progress on
this question more specifically, one probably has to go back to work
of the 70's on counterexamples to von Neumann's inequality.  See
\cite{nV74}, \cite{CD75}, \cite{bL94}, \cite{jH01}.

Motivated by the recent major strides in the study of two variable
rational inner functions from \cite{CW99}, \cite{GW04}, \cite{BSV05},
along with our own efforts \cite{gK08a}, \cite{gK10c}, the approach of
this article is to make progress on this question by studying rational
inner functions in the Schur-Agler class on $\D^3$.  For further
motivation and background to this approach we refer the reader to
\cite{gK10} and \cite{gK10b}.  We now introduce our topic purely in
terms of polynomials, as our main results serve to establish a close
connection between sums of squares decompositions for positive
trigonometric polynomials and the Schur-Agler class on the tridisk
$\D^3$.

\section{Rational inner functions in the Schur-Agler class}
Let $\D, \D^n,\T, \T^n$ denote the unit disk, polydisk, torus, and
$n$-torus.  We say $p \in \C[z_1,\dots,z_n]$ has multidegree at most
$\mbf{d} = (d_1,\dots,d_n)$ if it has degree at most $d_j$ in the
variable $z_j$.

If $p$ has multidegree at most $\mbf{d}$ we may form a type of
reflection (depending on the degree)
\[
\tilde{p}(z) := z^{\mbf{d}} \overline{p(1/\bar{z}_1, \dots,
  1/\bar{z}_n)} \in \C[z_1,\dots,z_n]
\]
and if in addition $p$ has no zeros on $\D^n$, then the rational
function
\begin{equation} \label{ratinn}
\phi(z) = \frac{\tilde{p}(z)}{p(z)}
\end{equation}
is a rational inner function; i.e.\! has modulus one a.e.\! on $\T^n$
and modulus at most one on $\D^n$, by the maximum principle.  Theorem
5.2.5 of \cite{wR69} proves that every rational inner function on
$\D^n$ arises as in \eqref{ratinn}.

In particular, 
\begin{equation} \label{pz}
\begin{aligned}
|p(z)|^2 - |\tilde{p}(z)|^2 &= 0 \text{ on } \T^n \\
|p(z)|^2 - |\tilde{p}(z)|^2 &\geq 0 \text{ on } \overline{\D}^n. 
\end{aligned}
\end{equation}
On the other hand, any expression of the form
\begin{equation} \label{sos}
\sum_{j=1}^{n} (1-|z_j|^2) SOS_j(z)
\end{equation}
where each $SOS_j$ is a sum of squared moduli of polynomials, also
satisfies this inequality.  It turns out that $\tilde{p}/p$ is in the
Schur-Agler class exactly when the left side of \eqref{pz} is of the
form \eqref{sos}.

\begin{theorem} 
Given a polynomial $p \in \C[z_1,\dots,z_n]$ with no zeros in $\D^n$
and degree at most $\mbf{d}$, $\tilde{p}/p$ is in the Schur-Agler
class exactly when there exists a decomposition
\[
|p(z)|^2 - |\tilde{p}(z)|^2 = \sum_{j=1}^{n} (1-|z_j|^2) SOS_j(z)
\]
where each $SOS_j$ is a sum of squared moduli of polynomials.
\end{theorem}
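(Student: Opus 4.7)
My plan rests on Agler's kernel characterization of the Schur-Agler class: $f:\D^n \to \D$ belongs to the class if and only if there exist positive semidefinite kernels $K_1,\dots,K_n$ on $\D^n$ with
\[
1 - f(z)\overline{f(w)} = \sum_{j=1}^{n}(1-z_j\bar{w}_j)\,K_j(z,w).
\]
The theorem then amounts to passing between this off-diagonal kernel identity and the on-diagonal SOS identity of the statement, via the polarization substitutions $|q(z)|^2 \leftrightarrow q(z)\overline{q(w)}$ and $|z_j|^2 \leftrightarrow z_j\bar{w}_j$.

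For the ``if'' direction I polarize the hypothesis. Writing each $SOS_j = \sum_k|q_{jk}|^2$ and treating the identity as a polynomial relation in $(z,\bar z)$, the substitution $\bar z \mapsto \bar w$ yields the polynomial identity
\[
p(z)\overline{p(w)} - \tilde p(z)\overline{\tilde p(w)} = \sum_j(1-z_j\bar{w}_j)\sum_k q_{jk}(z)\overline{q_{jk}(w)}.
\]
Dividing through by $p(z)\overline{p(w)}$, which is nonzero on $\D^n$, produces an Agler decomposition of $\tilde p/p$ with
\[
K_j(z,w) = \sum_k \frac{q_{jk}(z)}{p(z)}\,\overline{\left(\frac{q_{jk}(w)}{p(w)}\right)},
\]
visibly a positive kernel. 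Agler's theorem then places $\tilde p/p$ in the Schur-Agler class.

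The converse is where the work lies. Agler's theorem supplies positive kernels $K_j$ in the decomposition above; multiplying by $p(z)\overline{p(w)}$ and restricting to the diagonal $z=w$ gives
\[
|p(z)|^2 - |\tilde p(z)|^2 = \sum_j (1-|z_j|^2)\,\hat K_j(z,z),
\]
where $\hat K_j = p\bar p \cdot K_j$ is still positive. This has the desired shape, except that each $\hat K_j(z,z)$ is a priori only a positive function, not a sum of squared moduli of \emph{polynomials}. The main obstacle is therefore to refine the abstract Agler decomposition into a polynomial one. My plan for this step is to invoke the finite-dimensional unitary realization available for rational functions in the Schur-Agler class: $\tilde p/p$ admits a unitary colligation $\begin{pmatrix} A & B \\ C & D \end{pmatrix}$ on $\C \oplus \bigoplus_j \C^{N_j}$ realizing $\tilde p/p = A + BZ(z)(I-DZ(z))^{-1}C$ with $Z(z) = \bigoplus_j z_j I_{N_j}$. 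The lurking-isometry identity read off the rows of this unitary produces an Agler decomposition whose kernels, after multiplication by $p(z)\overline{p(w)}$ to clear the resolvent denominator, are finite sums of rank-one polynomial kernels $q_{jk}(z)\overline{q_{jk}(w)}$. Setting $z=w$ then yields the claimed SOS decomposition.
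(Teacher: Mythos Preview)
The paper does not supply its own proof of this theorem; it simply states the result and attributes it to \cite{CW99}. So there is no in-paper argument to compare against directly. Your outline is the standard one and is correct in spirit, with one genuine gap in the converse direction.

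Your ``if'' direction (polarize the SOS identity, divide by $p(z)\overline{p(w)}$, read off Agler kernels) is clean and correct. For ``only if'' you correctly isolate the real issue: the abstract Agler kernels need to be upgraded to polynomial sums of squares. Your proposed fix via a finite-dimensional unitary realization is the right instinct, but the last sentence skips over a real difficulty. The resolvent $(I-DZ(z))^{-1}$ has denominator $\det(I-DZ(z))$, and in several variables there is no reason this must equal a constant multiple of $p(z)$; a realization of a rational inner function need not be ``minimal'' in that sense, so $\det(I-DZ)$ may be a strict polynomial multiple of $p$. Hence multiplying the realization kernels by $p(z)\overline{p(w)}$ does not obviously clear the denominators and produce \emph{polynomial} rank-one pieces $q_{jk}(z)\overline{q_{jk}(w)}$.

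The cleanest way to close this gap is precisely the content of the paper's next result, Theorem~\ref{bounds} (from \cite{gK10}): once you have \emph{any} decomposition $|p|^2-|\tilde p|^2=\sum_j(1-|z_j|^2)\hat K_j(z,z)$ with positive semidefinite $\hat K_j$, a degree argument forces each $\hat K_j$ to be a sum of squared moduli of polynomials of the indicated bounded degree. So you can stop earlier than you do: take the Agler kernels, multiply by $p(z)\overline{p(w)}$, restrict to the diagonal to obtain positive semidefinite $\hat K_j$, and then invoke Theorem~\ref{bounds} rather than the realization to conclude that the $\hat K_j$ are polynomial SOS.
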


This theorem is implicit in \cite{CW99}.  To take a trivial example,
set $p(z)= 1$ which we momentarily view as having degree at most
$(1,1,\dots,1)$.  Then, a decomposition would be
\[
\begin{aligned}
&1-|z_1\cdots z_n|^2 \\
=& (1-|z_1|^2)+(1-|z_2|^2)|z_1|^2\\
& +
(1-|z_3|^2)|z_1z_2|^2 + \cdots + (1-|z_n|^2)|z_1\cdots z_{n-1}|^2.
\end{aligned}
\]
Here the sums of squares terms are each a single square.  For a
non-trivial example see \cite{gK10}.

We refined the above theorem as follows.

\begin{theorem}[\cite{gK10}] \label{bounds} 
If $p \in \C[z_1,\dots, z_n]$ has multi-degree at most $\mbf{d} =
(d_1,\dots, d_n)$ and $\tilde{p}/p$ is in the Schur-Agler class, then
given a decomposition:
\[
|p(z)|^2 - |\tilde{p}(z)|^2 = \sum_{j=1}^{n} (1-|z_j|^2) K_j(z,z)
\]
where each $K_j$ is a positive semi-definite function, it must be
the case that $K_j$ is a sum of squares of polynomials of degree at
most
\[
\begin{cases} d_j -1 \text{ in } z_j &\\
d_k \text{ in } z_k  &\text{ for } k \ne j
\end{cases}
\]
 In particular, $K_j$ can be written as a sum of at most
 $d_j\prod_{k\ne j}(d_k+1)$ polynomials (by dimensionality).
\end{theorem}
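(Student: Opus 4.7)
The plan is to exploit the positivity of each $K_j$ together with the degree-constrained polarized identity. Since each $K_j$ is a positive semi-definite kernel, the diagonal identity polarizes to
\[
p(z)\overline{p(w)} - \tilde{p}(z)\overline{\tilde{p}(w)} = \sum_{j=1}^n (1 - z_j\bar{w}_j) K_j(z,w),
\]
and rearranging yields the positivity identity
\[
\tilde{p}(z)\overline{\tilde{p}(w)} + \sum_j K_j(z,w) = p(z)\overline{p(w)} + \sum_j z_j\bar{w}_j K_j(z,w)
\]
between two positive semi-definite kernels. A standard lurking-isometry construction then produces a unitary on $\C \oplus \bigoplus_j \mcH_j$, where $\mcH_j$ is the RKHS of $K_j$, giving an Agler-type transfer function realization of $\phi = \tilde{p}/p$. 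The remaining content of the theorem is to upgrade this structural information into a polynomial degree bound on each $\mcH_j$.

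To convert the multi-degree bound on $p$ and $\tilde p$ into a polynomial-degree bound on $\mcH_j$, I would work monomial-by-monomial in the polarized identity. Write $K_j(z,w) = \sum_{\alpha,\beta} k^{(j)}_{\alpha\beta}\, z^\alpha \bar{w}^\beta$; positivity is equivalent to the Gram matrix $\bigl[k^{(j)}_{\alpha\beta}\bigr]$ being positive semi-definite, which gives the crucial inequalities $\bigl|k^{(j)}_{\alpha\beta}\bigr|^2 \leq k^{(j)}_{\alpha\alpha}\, k^{(j)}_{\beta\beta}$. Matching coefficients of $z^\gamma \bar{w}^\delta$ produces
\[
\bigl(p\overline{p}\bigr)_{\gamma\delta} - \bigl(\tilde p\,\overline{\tilde p}\bigr)_{\gamma\delta} = \sum_j \Bigl(k^{(j)}_{\gamma\delta} - k^{(j)}_{\gamma-e_j,\,\delta-e_j}\Bigr).
\]
When any coordinate of $\gamma$ or $\delta$ exceeds the allowed bound the left side vanishes, and the resulting telescoping relations in the $e_j$-direction, together with the positivity constraint, should force $k^{(j)}_{\alpha\beta}$ to vanish off the intended polynomial support.

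The main obstacle will be ruling out hidden cancellation between summands indexed by different $j$: a high-degree term in $K_j$ could in principle be compensated by a term in some $K_i$ with $i \neq j$. This is exactly where the positivity of each \emph{individual} $K_j$ is indispensable: any off-diagonal entry $k^{(j)}_{\alpha\beta}$ is dominated by the diagonals $k^{(j)}_{\alpha\alpha},\, k^{(j)}_{\beta\beta}$, so forcing a diagonal to vanish at an extreme index propagates through the Gram matrix and blocks such cancellation. Once the support of each $K_j$ is confined as claimed, diagonalizing the resulting finite Gram matrix writes $K_j$ as a sum of squares of polynomials of multi-degree at most $(d_1, \ldots, d_j-1, \ldots, d_n)$, and the dimension bound $d_j \prod_{k \neq j}(d_k+1)$ is just the count of such monomials.
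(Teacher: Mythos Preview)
This theorem is not proved in the present paper; it is quoted from \cite{gK10} and used as a black box in Section~4. So there is no ``paper's own proof'' here to compare your attempt against.

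Judged on its own merits, your outline has a genuine gap at exactly the place you flag. The polarization step already needs justification: the hypothesis is only that each $K_j$ is a positive semi-definite kernel, not that it is holomorphic in $z$ and antiholomorphic in $w$, so passing from the diagonal identity to
\[
p(z)\overline{p(w)}-\tilde p(z)\overline{\tilde p(w)}=\sum_j (1-z_j\bar w_j)K_j(z,w)
\]
is not automatic. More seriously, the coefficient-matching scheme you propose only yields relations of the form
\[
\bigl(p\bar p-\tilde p\,\overline{\tilde p}\bigr)_{\gamma\delta}=\sum_j\bigl(k^{(j)}_{\gamma\delta}-k^{(j)}_{\gamma-e_j,\delta-e_j}\bigr),
\]
and your plan to kill high-degree diagonals by looking at an ``extreme index'' tacitly assumes that the coefficient array of each $K_j$ has finite support. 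That is precisely the conclusion you are trying to establish. Without an a priori finiteness or summability input, the telescoping relations together with the Cauchy--Schwarz bound $|k^{(j)}_{\alpha\beta}|^2\le k^{(j)}_{\alpha\alpha}k^{(j)}_{\beta\beta}$ do not by themselves force any individual diagonal entry to vanish; the possible cancellation between different $j$ that you worry about is real, and your sketch does not close it.

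The argument in \cite{gK10} proceeds differently, through the reproducing kernel Hilbert spaces $\mcH(K_j)$ and the unitary coming out of the lurking-isometry construction, rather than through direct coefficient bookkeeping. If you want to salvage a coefficient-based approach, you will need some additional analytic input (e.g., integrating the diagonal identity over tori $r\T^n$ to control $\sum_\alpha k^{(j)}_{\alpha\alpha}r^{2|\alpha|}$, or an argument pinning down the $\mcH(K_j)$ as subspaces of polynomials) before the combinatorics can bite.
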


Recall that a function $K(z,\zeta)$ is positive semi-definite if for
every finite set $F$ the matrix
\[
(K(z,\zeta))_{z,\zeta \in F}
\]
is positive semi-definite.  (We would need an ordering to form an
actual matrix, but this is unimportant.)  For more information on
positive semi-definite kernels, refer to \cite{AM02} Section 2.7.

The main results of this paper relate to rational inner functions in
the Schur-Agler class on $\D^3$.  The first interesting result in this
area is due to Kummert.

\begin{theorem}[\cite{aK89b}] \label{kummert} 
If $p\in \C[z_1,z_2,z_3]$ has degree $(1,1,1)$ and has no zeros on
$\overline{\D}^3$, then $\tilde{p}/p$ is in the Schur-Agler class.
\end{theorem}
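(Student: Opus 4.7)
The plan is to produce the sum-of-squares decomposition guaranteed by the first (unnumbered) theorem of this section. By Theorem \ref{bounds}, every $K_j$ appearing in
\[
|p(z)|^2 - |\tilde{p}(z)|^2 = \sum_{j=1}^3 (1 - |z_j|^2) K_j(z,z)
\]
must be a sum of squared moduli of polynomials that are constant in $z_j$ and at most linear in each of the other two variables; such polynomials span a $4$-dimensional space, so each $K_j$ is encoded by a single $4 \times 4$ positive semi-definite matrix $M_j$. The task reduces to exhibiting these three matrices.

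I would attack this via the \emph{lurking isometry} method. Polarize the target identity and rearrange it into the form
\[
\|v(z)\|^2 = \|u(z)\|^2 \qquad \text{for every } z \in \C^3,
\]
where $u(z)$ and $v(z)$ are column vectors in $\C \oplus \C^4 \oplus \C^4 \oplus \C^4$ assembled from $p(z)$, $\tilde p(z)$, $z_j M_j^{1/2} E_j(z)$, and $M_j^{1/2} E_j(z)$, with $E_j(z)$ the column of monomials in the variables other than $z_j$. A norm equality valid identically in $z$ is equivalent to the existence of a well-defined isometry from $\text{span}\{u(z)\}$ onto $\text{span}\{v(z)\}$, which then extends to a unitary on the full $13$-dimensional space. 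Reading off the block entries of this unitary recovers the three matrices $M_j$, and reversing the polarization produces the SOS decomposition.

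To actually \emph{build} the candidate isometry I would go in the reverse direction: use the hypothesis that $p$ is nonvanishing on the \emph{closed} tridisk to form a finite-dimensional reproducing-kernel Hilbert space of rational functions with denominator $p$, on which the three coordinate shifts $M_{z_j}$ act boundedly. The reflection identity defining $\tilde p$ and the inner property of $\tilde p/p$ on $\T^3$ should force each $M_{z_j}$ to be coisometric modulo a finite defect, and those defects are the pieces that assemble into the $M_j$.

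\textbf{Main obstacle.} The hard step is the passage from scalar positivity of $|p|^2-|\tilde p|^2$ on $\overline{\D}^3$ to the weighted SOS decomposition; equivalently, verifying that the candidate linear map defined on span$\{u(z)\}$ is genuinely isometric, not merely contractive. On the bidisk this is automatic from the Agler machinery, but in three variables the analogous construction can fail, and the $(1,1,1)$ hypothesis is doing essential work: it forces the coisometric defects of the three shifts to have total dimension exactly $12$, leaving just enough room for a unitary dilation. I expect the crux of the proof to be this tight dimension-matching, which has no direct analog at higher multidegrees.
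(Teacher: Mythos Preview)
Your proposal identifies the right target (the weighted SOS decomposition) and the right tool (a lurking isometry), but the plan has a genuine gap and, more importantly, a misleading diagnosis of where the leverage comes from.

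The gap is the one you yourself flag as the ``main obstacle'': you have no mechanism for passing from the scalar positivity of $|p|^2-|\tilde p|^2$ on $\overline{\D}^3$ to the existence of the three positive semi-definite $M_j$. Your suggested route---build an RKHS with denominator $p$, look at defects of the three coordinate shifts, and rely on a ``tight dimension-matching'' forced by degree $(1,1,1)$---does not close this. In three or more variables the shift-defect construction does not automatically produce an Agler decomposition; that is precisely the content of the Schur-Agler class being strictly smaller than the Schur class. And the dimension-count intuition is actually wrong: Theorem \ref{thm1} shows the same conclusion holds for degree $(n,1,1)$ for every $n$, so nothing special happens at the $12$-versus-$13$ count.

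What you are missing is the reduction to \emph{two} variables. Write $p(z)=a(z_1,z_2)+b(z_1,z_2)z_3$; then on $\T^2$ one has $|p|^2-|\tilde p|^2=(1-|z_3|^2)(|a|^2-|b|^2)$, and $|a|^2-|b|^2$ is a strictly positive trigonometric polynomial in two variables, hence a sum of squares $\|E(z_1,z_2)\|^2$ (here the matrix Fej\'er--Riesz theorem suffices, since the degree in $z_2$ is one). The lurking isometry is now run with $z_3$ as the free parameter and $(z_1,z_2)$ fixed on $\T^2$, producing a rational matrix inner function $V(z_1,z_2)$ on $\D^2$; Kummert even gives an explicit formula for it. The decisive step is that on the \emph{bidisk} every rational matrix inner function has a unitary transfer-function realization (this is the two-variable Agler theory, which does hold). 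Feeding that realization back through the lurking-isometry identity yields the three-term SOS decomposition for $|p|^2-|\tilde p|^2$.

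So the $(1,1,1)$ hypothesis is not doing ``dimension-matching'' work; the degree $1$ in $z_3$ lets you peel off that variable, and the real engine is the unconditional validity of Agler's theorem in the remaining two variables.
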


We gave the following minor improvement to the details of the sums of
squares decomposition of $\tilde{p}/p$ in \cite{gK10b}.

\begin{theorem} \label{sharpkummert} 
If $p \in \C[z_1,z_2,z_3]$ has degree $(1,1,1)$ and no zeros on
$\overline{\D}^3$, then there exist sums of squares terms such that
\[
|p|^2 - |\tilde{p}|^2 = \sum_{j=1}^{3} (1-|z_j|^2)SOS_j(z)
\]
where $SOS_3$ is a sum of two squares, while $SOS_1$, $SOS_2$ are sums
of four squares.
\end{theorem}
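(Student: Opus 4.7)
The plan is to leverage Kummert's existence theorem (Theorem~\ref{kummert}) together with the multi-degree constraints of Theorem~\ref{bounds}, and then to refine the rank of the third kernel via a block-matrix argument.

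By Theorem~\ref{kummert}, $\phi = \tilde p/p$ lies in the Schur-Agler class, so by Theorem~\ref{bounds} there exist polynomial positive semidefinite kernels $K_1, K_2, K_3$ with
\[
|p|^2 - |\tilde p|^2 = \sum_{j=1}^{3}(1-|z_j|^2)K_j(z,z),
\]
and each $K_j$ has multi-degree at most $(1,1,1) - \mbf{e}_j$ in $z$ and in $w$, where $\mbf{e}_j$ is the $j$-th standard basis vector. Since the space of polynomials in three variables of such multi-degree has dimension $4$ in each case, each $K_j$ is a sum of at most $4$ squared moduli of polynomials by dimensionality. This already yields the desired bounds for $SOS_1$ and $SOS_2$.

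To improve the bound for $SOS_3$ from $4$ to $2$, I would work with the lifted polynomial kernel identity
\[
p(z)\overline{p(w)} - \tilde p(z)\overline{\tilde p(w)} = \sum_{j=1}^{3}(1 - z_j \bar w_j) K_j(z,w).
\]
Writing $p(z) = A(z_1, z_2) + z_3 B(z_1, z_2)$ and $\tilde p(z) = B^*(z_1, z_2) + z_3 A^*(z_1, z_2)$, where $A^*$ and $B^*$ denote the bi-degree-$(1,1)$ reflections of $A$ and $B$ in the first two variables, and then expanding both sides in powers of $(z_3, \bar w_3)$ yields four kernel identities in the variables $(z_1, z_2)$ and $(w_1, w_2)$. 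Since $K_3$ is independent of $z_3$ and $\bar w_3$, it appears (with opposite signs) only in the $z_3^0\bar w_3^0$ and $z_3^1\bar w_3^1$ coefficient equations, while the $(0,1)$ and $(1,0)$ equations involve only the corresponding bi-coefficients of $K_1$ and $K_2$. Adding and subtracting the $(0,0)$ and $(1,1)$ equations then expresses $K_3$ algebraically in terms of $A\bar A - B^*\overline{B^*}$, $B\bar B - A^*\overline{A^*}$, and the $z_3$-bi-coefficients of $K_1, K_2$.

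The main obstacle is combining the resulting algebraic expression for $K_3$ with the positive semidefiniteness of $K_1$ and $K_2$ as kernels in all three variables---equivalently, the PSDness of their $2 \times 2$ operator-valued block matrices in $(z_3, \bar w_3)$---to force $K_3$ to have rank at most $2$. I would approach this via a Schur complement argument on the block structure: the PSDness of $K_j$ controls the off-diagonal blocks $K_j^{01}, K_j^{10}$ by the on-diagonal ones, and the resulting representation of $K_3$ splits into a sum of just two rank-one polynomial kernels, corresponding to two natural bi-degree-$(1,1)$ polynomials built from the pairs $(A, B^*)$ and $(B, A^*)$. The key technical difficulty is verifying this rank-$2$ bound uniformly in $p$; I expect this follows from a careful feasibility analysis of the $4 \times 4$ Gram matrix produced by the Schur-complement decomposition, together with the hypothesis that $p$ has no zeros on $\cD^3$.
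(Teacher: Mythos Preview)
Your first paragraph is correct: invoking Theorem~\ref{kummert} and then Theorem~\ref{bounds} immediately gives a decomposition with each $SOS_j$ a sum of at most four squares, which handles $SOS_1$ and $SOS_2$.

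The gap is entirely in your treatment of $SOS_3$. What you have written is not a proof but a plan with an explicitly unresolved ``main obstacle.'' More seriously, the strategy itself is aimed at the wrong target. You appear to be trying to show that \emph{any} $K_3$ arising in \emph{any} Agler decomposition has rank at most~$2$; but the theorem is existential, and there is no reason to expect such a uniform rank bound. The Schur-complement manipulation you outline expresses $K_3$ in terms of the $z_3$-bicoefficients of $K_1,K_2$, but positive semidefiniteness of those blocks does not by itself force a rank drop in $K_3$; you would have to \emph{modify} the decomposition, and your sketch gives no mechanism for doing so.

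The paper takes an entirely different route and never tries to cut down a pre-existing $K_3$. Writing $p=a(z_1,z_2)+z_3\,b(z_1,z_2)$, one has on $\T^2$
\[
|p|^2-|\tilde p|^2=(1-|z_3|^2)\bigl(|a|^2-|b|^2\bigr),
\]
so the whole problem for $SOS_3$ is to write the nonnegative degree-$(1,1)$ trigonometric polynomial $|a|^2-|b|^2$ as a sum of two squares. This is done directly by the paper's Lemma: $|a|^2-|b|^2=t_0(z_1)+z_2 t_1(z_1)+\overline{z_2 t_1(z_1)}\ge 0$ forces the $2\times 2$ matrix trigonometric polynomial
\[
T(z_1)=\begin{bmatrix}\tfrac12 t_0 & t_1\\ \overline{t_1} & \tfrac12 t_0\end{bmatrix}
\]
to be PSD, and the matrix Fej\'er--Riesz theorem factors $T=A^*A$ with $A$ a $2\times 2$ polynomial matrix; then $|a|^2-|b|^2=\|A(z_1)\begin{bmatrix}1\\ z_2\end{bmatrix}\|^2$ is a sum of exactly two squares, so $SOS_3=\|E\|^2$ with $E\in\C^2[z_1,z_2]$. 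The remaining terms $SOS_1,SOS_2$ are then \emph{constructed} (not inherited from an arbitrary decomposition) via a lurking-isometry / transfer-function argument applied to a $3\times 3$ rational matrix inner function built from $a,b,E$. The key idea you are missing is the matrix Fej\'er--Riesz factorization; once you have it, there is no need for Kummert's theorem, Theorem~\ref{bounds}, or any Schur-complement analysis of a given $K_3$.
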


Our two main results are the following.  We improve the above results
to the case of polynomials of degree $(n,m,1)$ and exhibit a new
phenomenon in the study of the Schur-Agler class.

\begin{theorem} \label{thm1}
If $p\in \C[z_1,z_2,z_3]$ has degree $(n,1,1)$ and no zeros on
$\overline{\D}^3$, then $\tilde{p}/p$ is in the Schur-Agler class.
Furthermore, we have a decomposition
\[
|p|^2 - |\tilde{p}|^2 = \sum_{j=1}^{3} (1-|z_j|^2)SOS_j(z)
\]
where $SOS_3$ is a sum of two squares, while $SOS_1$, $SOS_2$ are sums
of $4(n-1), 2(n+1)$ squares respectively.
\end{theorem}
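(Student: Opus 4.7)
The plan is to reduce the three-variable problem to the sharp two-variable theory by exploiting the degree-one structure in $z_3$. Write $p(z_1, z_2, z_3) = A(z_1, z_2) + B(z_1, z_2) z_3$ with $A, B \in \C[z_1, z_2]$ of bidegree at most $(n, 1)$. Because $p$ has no zeros on $\overline{\D}^3$, setting $z_3 = 0$ shows $A$ has no zeros on $\overline{\D}^2$, and solving the linear equation $A + B z_3 = 0$ in $z_3$ shows $|B(z_1, z_2)| < |A(z_1, z_2)|$ on $\overline{\D}^2$. A direct calculation yields $\tilde{p} = \tilde{B} + \tilde{A} z_3$, where the tildes denote the $(n, 1)$-reflection in the $(z_1, z_2)$ variables.

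On $\T^2$ in $(z_1, z_2)$ the reflections simplify to $\tilde{A} = z_1^n z_2 \overline{A}$ and $\tilde{B} = z_1^n z_2 \overline{B}$, which forces the cross-term $\overline{A} B - \overline{\tilde{B}}\,\tilde{A}$ to vanish and gives
\[
\bigl(|p|^2 - |\tilde{p}|^2\bigr)\big|_{\T^2} = \bigl(|A|^2 - |B|^2\bigr)(1 - |z_3|^2).
\]
Since $|A|^2 - |B|^2$ is a strictly positive trigonometric polynomial on $\T^2$ of bidegree $(n, 1)$, a matrix Fej\'er--Riesz factorization of the associated $2 \times 2$ positive matrix polynomial in $z_1$ produces $\gamma_1, \gamma_2 \in \C[z_1, z_2]$ of bidegree $\leq (n, 1)$ with $|A|^2 - |B|^2 = |\gamma_1|^2 + |\gamma_2|^2$ on $\T^2$. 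Setting $SOS_3 := |\gamma_1|^2 + |\gamma_2|^2$ gives the desired two-square term, and the residual
\[
Y := |p|^2 - |\tilde{p}|^2 - (1 - |z_3|^2)\, SOS_3
\]
vanishes identically on $\T^2 \times \C$.

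To complete the proof one writes $Y = (1 - |z_1|^2) SOS_1 + (1 - |z_2|^2) SOS_2$ with the claimed square counts. I would apply the sharp two-variable Agler decomposition (Theorem \ref{bounds} in two variables) to the pencil $\lambda \mapsto p(\cdot, \cdot, \lambda)$ of bidegree-$(n, 1)$ polynomials with $\lambda \in \T$, arguing that the sum-of-squares terms can be chosen to depend polynomially on $\lambda$ of degree at most $1$, since the coefficients of $p(\cdot, \cdot, \lambda)$ do. The degree-one dependence on $z_3$ then doubles the final counts: the sharp $n + 1$ squares on the $(1-|z_2|^2)$-side of the two-variable decomposition double to $2(n+1)$, and the $2(n-1)$ squares on the $(1-|z_1|^2)$-side---after a compatibility correction with the already-chosen $SOS_3$---double to $4(n-1)$.

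The main obstacle is the simultaneous control of the two constructions: the two-square representation of $|A|^2 - |B|^2$ on $\T^2$, and the parametric-in-$\lambda$ two-variable Agler decomposition of $p(\cdot, \cdot, \lambda)$. These must be made compatible in the sense that the $(1-|z_3|^2)$-residual produced by the parametric decomposition matches exactly the $SOS_3$ chosen from the Fej\'er--Riesz step. Reconciling the two is the heart of the argument and is what drives the specific square counts; the analysis is analogous in spirit to---but more intricate than---the proof of the sharp Kummert theorem, Theorem \ref{sharpkummert}, which supplies the base case $n = 1$.
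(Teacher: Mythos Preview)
Your first half matches the paper exactly: write $p=a+bz_3$, compute $|p|^2-|\tilde p|^2=(1-|z_3|^2)(|a|^2-|b|^2)$ on $\T^2$, and apply the matrix Fej\'er--Riesz lemma to get $|a|^2-|b|^2=\|E\|^2$ with $E$ a $\C^2$-valued polynomial of bidegree at most $(n,1)$. So your $SOS_3$ is correct.

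The gap is the second half. You propose to run the two-variable Agler decomposition on $p(\cdot,\cdot,\lambda)$ parametrically in $\lambda\in\T$ and assert that the resulting $SOS_1^\lambda,\,SOS_2^\lambda$ can be chosen polynomial of degree one in $\lambda$, and moreover compatible with the already-fixed $SOS_3$. You give no mechanism for this: the affine dependence of the \emph{input} on $\lambda$ does not force any structure on the \emph{output} of a highly non-unique decomposition, and you yourself flag the compatibility as ``the main obstacle.'' But that obstacle \emph{is} the theorem; everything else is bookkeeping. The paper does not attempt a parametric selection of scalar decompositions at all. Instead, from the rearranged identity $|p|^2+|z_3|^2\|E\|^2=|\tilde p|^2+\|E\|^2$ on $\T^2$ it extracts a lurking isometry $\begin{bmatrix} p\\ z_3E\end{bmatrix}\mapsto\begin{bmatrix}\tilde p\\ E\end{bmatrix}$ in $\C^{N+1}$, extends it by an explicit Kummert-type formula to a rational $(N{+}1)\times(N{+}1)$ matrix inner function $V(z_1,z_2)$ on $\D^2$, and then invokes the known finite-dimensional unitary transfer-function realization of two-variable rational \emph{matrix} inner functions. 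Feeding $\begin{bmatrix} p\\ z_3E\end{bmatrix}$ through the realization identity produces vector functions $H_1,H_2$ with
\[
|p|^2-|\tilde p|^2=(1-|z_1|^2)\|H_1\|^2+(1-|z_2|^2)\|H_2\|^2+(1-|z_3|^2)\|E\|^2,
\]
after which Theorem~\ref{bounds} gives the square counts. In short, the ``polynomial-in-$\lambda$'' selection problem you pose is solved not by choosing scalar two-variable decompositions cleverly, but by absorbing the $z_3$-dependence into a matrix-valued two-variable inner function and applying realization theory there; this is the missing idea in your proposal.
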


\begin{theorem} \label{thm2}
If $p \in \C[z_1,z_2,z_3]$ has no zeros on $\overline{\D}^3$ and
degree at most $(n,m,1)$, then there exist integers $r,s \geq 0$ such
that
\[
\frac{z_1^r z_2^s \tilde{p}(z_1,z_2,z_3)}{p(z_1,z_2,z_3)}
\]
is in the Schur-Agler class.  
\end{theorem}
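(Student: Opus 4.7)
By the reformulation given earlier in the paper, it suffices to produce integers $r, s \geq 0$ together with sums of squared moduli $SOS_1, SOS_2, SOS_3$ such that
\[
|p|^2 - |z_1^r z_2^s \tilde p|^2 = \sum_{j=1}^{3} (1-|z_j|^2) SOS_j.
\]
My plan is to exploit the degree-one behavior in $z_3$ to read off what $SOS_3$ must be, reduce the remainder to a bidisk question, and then use the extra positivity supplied by the multiplier $z_1^r z_2^s$ to obtain $SOS_1$ and $SOS_2$. Write $p = A + z_3 B$ and $\tilde p = \tilde B + z_3 \tilde A$, where $A, B \in \C[z_1, z_2]$ have degree at most $(n, m)$ and the tildes denote reflection at that bivariate degree. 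Inner-ness of $\tilde p/p$ forces $|A|^2 + |B|^2 = |\tilde A|^2 + |\tilde B|^2$ and $B \bar A = \tilde A \overline{\tilde B}$ on $\T^2$. Substituting these and using $|z_1^r z_2^s| = 1$ on $\T^2$, one obtains
\[
|p|^2 - |z_1^r z_2^s \tilde p|^2 = (|A|^2 - |\tilde B|^2)(1 - |z_3|^2) \quad \text{on } \T^2 \times \overline{\D},
\]
and nonnegativity of the left side on $\overline{\D}^3$ forces $|A|^2 \geq |\tilde B|^2$ on $\T^2$, pinning down the boundary value of any admissible $SOS_3$.

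I would then construct $SOS_3$ as a sum of squared moduli of polynomials in $(z_1, z_2)$ that agrees with $|A|^2 - |\tilde B|^2$ on $\T^2$. Under strict positivity of the difference, Schmudgen's Positivstellensatz applied on the compact distinguished boundary $\T^2$ of $\overline{\D}^2$ furnishes such a representation, while degenerate cases would need a separate perturbation argument. With $SOS_3$ fixed, the residue
\[
R := |p|^2 - |z_1^r z_2^s \tilde p|^2 - (1-|z_3|^2) SOS_3
\]
vanishes on $\T^2 \times \overline{\D}$, so a Hermitian ideal-membership argument yields Hermitian polynomials $T_1, T_2$ with $R = (1-|z_1|^2) T_1 + (1-|z_2|^2) T_2$, though not a priori sums of squares.

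The main obstacle is upgrading $T_1, T_2$ to sums of squared moduli once $r, s$ are taken large enough. For this I would exploit the identity
\[
1 - |z_1^r z_2^s|^2 = (1-|z_1|^{2r}) + |z_1|^{2r}(1-|z_2|^{2s}),
\]
together with $1 - |z_j|^{2k} = (1-|z_j|^2)(1 + |z_j|^2 + \cdots + |z_j|^{2k-2})$, to isolate within $T_1, T_2$ strictly positive SOS contributions of the form $|\tilde p|^2$ times an SOS of monomials, whose magnitudes grow in $r$ and $s$. The crux is to show, via a compactness argument on $\T^2$ combined with a bidisk Positivstellensatz, that for $r, s$ sufficiently large these growing positive contributions dominate any indefinite part of $T_1, T_2$ descending from the $r = s = 0$ residue, thereby promoting both $T_1$ and $T_2$ to sums of squared moduli. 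This absorbing step is where I expect the bulk of the technical work to lie.
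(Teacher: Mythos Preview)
Your opening is right and matches the paper: writing $p=A+z_3 B$ and computing on $\T^2$ pins down $SOS_3$ as a sum-of-squares representation of the strictly positive trigonometric polynomial $|A|^2-|B|^2$ (your $|A|^2-|\tilde B|^2$ equals this on $\T^2$). But from there your route and the paper's diverge, and your ``absorbing step'' is a genuine gap rather than a technicality.

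The difficulty is this. Once you fix $SOS_3$, the residue $R$ vanishes on $\T^2\times\C$, so you can write $R=(1-|z_1|^2)T_1+(1-|z_2|^2)T_2$ with Hermitian $T_1,T_2$; but this splitting is wildly non-unique, and neither $T_j$ comes equipped with any positivity. Your plan is to note that increasing $r,s$ adds the SOS terms $|\tilde p|^2(1+|z_1|^2+\cdots+|z_1|^{2r-2})$ and its $z_2$-analogue to $T_1,T_2$, and hope these eventually dominate the indefinite part. But ``large SOS plus indefinite Hermitian'' is not SOS in general, and there is no off-the-shelf bidisk Positivstellensatz that applies: the $T_j$ live in three pairs of variables, are not known to be positive on any concrete set, and the compactness you invoke on $\T^2$ gives pointwise control, not SOS membership. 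Even the relevant Quillen-type theorems require strict positivity of a fixed Hermitian form on a specified set, which you do not have for $T_1,T_2$ individually. So as stated the absorbing step has no mechanism behind it.

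The paper avoids this entirely by an operator-theoretic route. After writing $|a|^2-|b|^2=\|E\|^2$ on $\T^2$ with $E$ a vector polynomial of some degree $(n+r,m+s)$ (this is where $r,s$ come from --- they are determined by $E$, not chosen later to absorb anything), one builds an explicit $(N{+}1)\times(N{+}1)$ matrix function $V(z_1,z_2)$, rational and inner on $\D^2$, implementing the lurking isometry
\[
V\begin{bmatrix}p\\ z_3 E\end{bmatrix}=\begin{bmatrix}z_1^r z_2^s\tilde p\\ E\end{bmatrix}.
\]
The key input you are missing is then the two-variable fact (Kummert/Agler) that every matrix rational inner function on $\D^2$ admits a finite unitary transfer-function realization. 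Plugging that realization into the equation above and reading off norms produces vector functions $H_1,H_2$ with
\[
|p|^2-|z_1^r z_2^s\tilde p|^2=(1-|z_1|^2)\|H_1\|^2+(1-|z_2|^2)\|H_2\|^2+(1-|z_3|^2)\|E\|^2,
\]
so $SOS_1,SOS_2$ are handed to you automatically as squared norms --- no absorbing argument is needed. In short: the paper reduces the three-variable problem to the (known) two-variable matrix Schur--Agler theory, whereas your proposal tries to manufacture $SOS_1,SOS_2$ by a positivity/domination argument whose crucial step is not justified.
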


This phenomenon has not been observed in the study of the Schur-Agler
class (although it is analogous to results in ``sums of squares'' such
as Quillen's theorem \cite{dQ}).  We do not yet have an example of $p$
such that $\tilde{p}/p$ is not Schur-Agler while $z_1^r z_2^s
\tilde{p}/p$ is.  However, in the last section we explain how a
construction might go.

We proceed to two necessary preliminary results and then to the proof
of Theorems \ref{thm1} and \ref{thm2}.

\section{Preliminary results}
The following result is proven in \cite{aM03}, \cite{mD04},
\cite{GL06}, and \cite{bD07}.  

\begin{theorem} \label{trigsos}
Suppose $t$ is a $d$ variable, positive trigonometric polynomial:
\[
t(z) = \sum_{-N \leq |\al| \leq N} t_{\al} z^\al > 0 \text{ for all } z
= (z_1,\dots, z_d) \in \T^d
\]
where we use multi-index notation with $\al \in \Z^d$.  Then $t$ can
be written as a sum of squares of polynomials; i.e. there exist $A_j
\in \C[z_1,\dots, z_d]$ such that
\[
t(z) = \sum_{j=1}^{M} |A_j(z)|^2 \qquad (z \in \T^n).
\]
\end{theorem}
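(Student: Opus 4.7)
My plan is a duality argument via Hahn--Banach separation followed by a Gelfand--Naimark--Segal construction that realizes a separating functional as integration against a positive measure on $\T^d$.

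Let $\mathcal{T}$ denote the trigonometric polynomials on $\T^d$, and for each $N$ let $\mathcal{T}_N \subset \mathcal{T}$ be the finite-dimensional subspace of those with Fourier support in $[-N,N]^d$. Inside $\mathcal{T}_N$, the cone $\Sigma_N$ of sums $\sum_j |A_j|^2$ with $A_j$ of multidegree at most $(N,\dots,N)$ is the continuous image of the PSD-matrix cone under the Gram map, hence closed. Suppose for contradiction that $t \notin \Sigma_N$ for every $N \ge \deg t$; then finite-dimensional Hahn--Banach supplies separating functionals $L_N: \mathcal{T}_N \to \mathbb{R}$ with $L_N(|A|^2) \ge 0$ for every polynomial $A$ of multidegree $\le (N,\dots,N)$ and $L_N(t) < 0$. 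Cauchy--Schwarz on the positive semidefinite form $(A,B) \mapsto L_N(A\bar B)$ forces $L_N(1) > 0$, so normalize $L_N(1) = 1$; writing $z^\al = z^{\al^+}\overline{z^{\al^-}}$ on $\T^d$ and applying Cauchy--Schwarz again gives the uniform bound $|L_N(z^\al)| \le 1$ for every $\al \in [-N,N]^d$. A diagonal extraction then yields a linear functional $L$ on $\mathcal{T}$ satisfying $L(1) = 1$, $L(t) \le 0$, and $L(|A|^2) \ge 0$ for every polynomial $A$.

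The GNS construction applied to the Hermitian form $\ip{p}{q} := L(p\bar q)$ on $\mathcal{T}$ completes the argument. The form is positive semidefinite since every trigonometric polynomial $q$ equals $z^{-\mbf{N}}A$ on $\T^d$ for some polynomial $A$, giving $L(|q|^2) = L(|A|^2) \ge 0$. Quotienting by null vectors and completing produces a Hilbert space $H$ on which multiplication by each $z_j$ descends to an operator $U_j$; since $z_j \bar z_j = 1$ on $\T^d$ we have $M_{z_j} M_{\bar z_j} = I$ at the trigonometric-polynomial level, and combining this with the adjoint identity $M_{z_j}^* = M_{\bar z_j}$ shows that each $U_j$ is unitary and that the $U_j$ mutually commute. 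The spectral theorem for commuting unitaries, applied to the cyclic vector $\mbf{1} \in H$, produces a positive Borel measure $\mu$ on $\T^d$ with $\int f\, d\mu = L(f)$ for every $f \in \mathcal{T}$. Since $\mu(\T^d) = L(1) = 1$ and $t > 0$ on $\T^d$, $L(t) = \int t\, d\mu > 0$, contradicting $L(t) \le 0$.

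The main obstacle is ensuring the diagonal extraction produces a functional $L$ on the infinite-dimensional $\mathcal{T}$ that still weakly separates $t$ from the SOS cone in the limit; the key ingredient is the Cauchy--Schwarz bound $|L_N(z^\al)| \le 1$ afforded by the normalization $L_N(1) = 1$. Once $L$ is secured, the GNS and spectral-theorem steps are essentially standard.
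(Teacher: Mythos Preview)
The paper does not prove this theorem at all; it is quoted from the literature (the references listed just before the statement) and used as a black box, so there is no in-paper argument to compare against.

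Your argument is essentially correct and is one of the standard routes to the result, in the spirit of Schm\"udgen-type solutions of the trigonometric moment problem: separate, pass to a limiting positive functional, realize it via GNS as a measure on $\T^d$, and contradict strict positivity of $t$. One step deserves tightening. The phrase ``the continuous image of the PSD-matrix cone under the Gram map, hence closed'' is not valid reasoning as written, since linear images of closed cones need not be closed. What actually makes $\Sigma_N$ closed is that the Gram map $G \mapsto \sum_{\al,\beta} G_{\al\beta}\, z^{\al}\bar z^{\beta}$ sends $\mathrm{tr}\,G$ to the constant Fourier coefficient; hence any sequence of PSD Gram matrices with convergent images has bounded trace and therefore a convergent subsequence. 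Equivalently, the kernel of the Gram map meets the PSD cone only at $0$, which in finite dimensions forces the image cone to be closed. With that patched, the normalization $L_N(1)=1$, the Cauchy--Schwarz bound $|L_N(z^{\al})|\le 1$, the diagonal extraction, and the GNS/spectral-theorem conclusion all go through exactly as you describe. (A tiny notational quibble: you write $L_N:\mathcal{T}_N\to\mathbb{R}$ but then evaluate $L_N(z^{\al})$; implicitly you are separating inside the real subspace of Hermitian trigonometric polynomials and then extending complex-linearly, which is routine.)
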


Known proofs of this result require both strict positivity and can
only control the numbers of polynomials (and their degrees) in the
sums of squares decomposition in terms of a bound below on $t$.  See
\cite{GL06} for more detail.  It is this subtlety that creates the
need to multiply by a sufficiently high degree monomial in Theorem
\ref{thm2}.  We get around this in Theorem \ref{thm1} via the
following lemma.

\begin{lemma} 
Let $t$ be a non-negative, two variable trigonometric polynomial of
degree $(n,1)$, i.e.
\begin{equation} \label{teq}
t(z_1,z_2) = t_0(z_1) + z_2t_1(z_1) + \overline{z_2t_1(z_1)} \geq 0 
\end{equation}
where $t_0, t_1$ are one variable trigonometric polynomials of degree
at most $n$.  Then, there exist $A_1,A_2 \in \C[z_1,z_2]$ of degree at
most $(n,1)$ such that
\[
t(z) = |A_1(z)|^2 + |A_2(z)|^2 \qquad (z \in \T^2).
\]
\end{lemma}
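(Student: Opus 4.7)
My plan is to recast the nonnegativity of $t$ on $\T^2$ as a matrix positivity statement in $z_1$ alone, and then apply the matrix Fej\'er--Riesz (Rosenblum--Rovnyak) theorem. Introduce the self-adjoint matrix-valued trigonometric polynomial
$$M(z_1) := \begin{pmatrix} t_0(z_1)/2 & t_1(z_1) \\ \overline{t_1(z_1)} & t_0(z_1)/2 \end{pmatrix},$$
whose entries are one-variable trigonometric polynomials of degree at most $n$. A direct computation shows that on $\T^2$,
$$t(z_1,z_2) = \begin{pmatrix} 1 & \bar{z}_2 \end{pmatrix} M(z_1) \begin{pmatrix} 1 \\ z_2 \end{pmatrix}.$$

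The next step is to show that the hypothesis $t \geq 0$ on $\T^2$ is equivalent to $M(z_1) \succeq 0$ for every $z_1 \in \T$. For fixed $z_1$, the map $z_2 \mapsto t(z_1,z_2) = t_0(z_1) + 2\operatorname{Re}(z_2 t_1(z_1))$ attains its minimum over $z_2 \in \T$ at $t_0(z_1) - 2|t_1(z_1)|$, so $t \geq 0$ on $\T^2$ is equivalent to $t_0(z_1) \geq 2|t_1(z_1)|$ for every $z_1 \in \T$. That condition is in turn equivalent to $M(z_1) \succeq 0$: the diagonal entries become nonnegative and the determinant $t_0(z_1)^2/4 - |t_1(z_1)|^2$ becomes nonnegative.

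Now I invoke the matrix Fej\'er--Riesz theorem: since $M(z_1)$ is a self-adjoint matrix-valued trigonometric polynomial of degree at most $n$ with $M(z_1) \succeq 0$ on $\T$, there exists a $2 \times 2$ matrix polynomial
$$Q(z_1) = \begin{pmatrix} q_{11}(z_1) & q_{12}(z_1) \\ q_{21}(z_1) & q_{22}(z_1) \end{pmatrix}$$
with entries in $\C[z_1]$ of degree at most $n$ such that $M(z_1) = Q(z_1)^* Q(z_1)$ for $z_1 \in \T$. Setting $A_j(z_1,z_2) := q_{j1}(z_1) + z_2 \, q_{j2}(z_1)$ for $j=1,2$ then yields polynomials of degree at most $(n,1)$, and on $\T^2$
$$|A_1(z)|^2 + |A_2(z)|^2 = \left\| Q(z_1) \begin{pmatrix} 1 \\ z_2 \end{pmatrix} \right\|^2 = \begin{pmatrix} 1 & \bar{z}_2 \end{pmatrix} M(z_1) \begin{pmatrix} 1 \\ z_2 \end{pmatrix} = t(z_1,z_2),$$
as desired.

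The one nontrivial ingredient is the matrix Fej\'er--Riesz factorization, which supplies a one-sided polynomial factor of the correct degree; everything else is bookkeeping. It is worth noting that no strict positivity is needed here, because the matrix version of Fej\'er--Riesz applies directly to any positive semidefinite matrix trigonometric polynomial on $\T$---this is precisely what lets us avoid the monomial-multiplier trick forced on us in the general setting of Theorem~\ref{thm2}.
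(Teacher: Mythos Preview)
Your proof is correct and follows essentially the same approach as the paper: form the $2\times 2$ matrix-valued trigonometric polynomial with diagonal $t_0/2$ and off-diagonal $t_1$, deduce its positive semidefiniteness from the minimization $t_0\ge 2|t_1|$, apply the matrix Fej\'er--Riesz theorem to factor it as $Q^*Q$ with $\deg Q\le n$, and read off the two squares from $Q(z_1)\begin{pmatrix}1\\ z_2\end{pmatrix}$. You have supplied a bit more detail on the equivalence $t\ge 0 \Leftrightarrow M\succeq 0$ and on extracting $A_1,A_2$ explicitly, but the argument is the same.
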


This lemma is implicitly known, but in a different language/context
(and with more complicated proofs and less detail).  In \cite{jG98}
and \cite{BG98}, it is proven and phrased in the language of
trigonometric moment problems.  The connection to sums of squares is
because of the main result of \cite{wR63}, which, loosely speaking,
says that given a subset $\Lambda$ of $\Z_+^d$, one can solve
(truncated) trigonometric moment problems on $\Lambda-\Lambda$ if and
only if non-negative trigonometric polynomials with Fourier support in
$\Lambda-\Lambda$ are sums of squares of polynomials with coefficient
support in $\Lambda$.
 
\begin{proof}
The proof is really the same as the degree $(1,1)$ case, which we gave
in \cite{gK10b}.  By minimizing \eqref{teq} over $z_2$, we see that
$t_0(z_1) \geq 2|t_1(z_1)|$.  This implies that the $2\times 2$ matrix
trigonometric polynomial
\[
T(z_1) = \begin{bmatrix} \frac{1}{2} t_0(z_1) & t_1(z_1) \\
\overline{t_1(z_1)} & \frac{1}{2} t_0(z_1) \end{bmatrix}
\]
is positive semi-definite.  By the matrix Fej\'er-Riesz theorem (due
to M.Rosenblum, see \cite{mD04} for a recent proof and references),
$T$ can be factored as
\[
T(z_1) = A(z_1)^* A(z_1)
\]
where $A \in \C^{2\times 2}[z_1]$ is a matrix polynomial of degree at
most $n$.  Then, 
\[
t(z_1,z_2) = \begin{bmatrix} 1 & \bar{z}_2 \end{bmatrix}
T(z_1) \begin{bmatrix} 1 \\ z_2 \end{bmatrix} = \left\|
A(z_1) \begin{bmatrix} 1 \\ z_2 \end{bmatrix} \right\|^2
\]
is a sum of two squares of the desired type.
\end{proof}

\section{Proof of Theorems \ref{thm1} and \ref{thm2}}

To prove Theorems \ref{thm1} and \ref{thm2} simultaneously we merely
need to keep track of whether we are using the lemma or Theorem
\ref{trigsos}.  A brief notational warning: if $E$ is a column vector
of polynomials in the variables $z_1,z_2$ (as will occur below), we
shall write $\|E(z_1,z_2)\|^2$ for the sum of squares of the entries
of $E$, and often to save space we write $\|E\|^2$ for the same
expression.  These are all pointwise euclidean norms and do not
represent any kind of function space norm.

Write $p(z) = a(z_1,z_2) + b(z_1,z_2) z_3$ where $a,b \in \C[z_1,z_2]$
have degree at most $(n,m)$.  For $z_1,z_2 \in \T$, by direct
computation
\begin{equation} \label{lurkisom1}
|p|^2 - |\tilde{p}|^2 = (1-|z_3|^2)(|a(z_1,z_2)|^2- |b(z_1,z_2)|^2).
\end{equation}
Then, for $(z_1,z_2) \in \T^2$, $|a(z_1,z_2)|^2- |b(z_1,z_2)|^2$ is a
non-negative two variable trig polynomial of degree at most $(n,m)$.
As $p$ has no zeros on $\overline{\D}^3$, $|a|^2 - |b|^2$ is in fact
strictly positive on $\T^2$, since a zero $(z'_1,z'_2)$ would imply
$|p(z'_1,z'_2, \cdot)| = |\tilde{p}(z'_1,z'_2,\cdot)|$ and this would
mean $z_3 \mapsto p(z'_1,z'_2,z_3)$ has a zero on $\T$.

By the lemma or by Theorem \ref{trigsos}, we may write
\[
|a(z_1,z_2)|^2- |b(z_1,z_2)|^2 = \|E(z_1,z_2)\|^2 \text{ on } \T^2
\]
where $E$ is a vector polynomial with values in $\C^N$ (this provides
a convenient way to represent sums of squares).  In the degree
$(n,1,1)$ case we may take $N=2$ and in the $(n,m,1)$ case we do not
know what $N$ is.  Set $\tilde{E}(z_1,z_2) = z_1^{n+r} z_2^{m+s}
\overline{E(1/\bar{z}_1, 1/\bar{z}_2)}$, where we assume $E$ has
degree $(n+r,m+s)$.  Again, in the case $m=1$, we may take $r=s=0$.

We also remark that since $p$ has no zeros on $\overline{\D}^3$, $a$
has no zeros on $\overline{\D}^2$.  By the maximum principle 
\[
\frac{\tilde{b}(z_1,z_2)}{a(z_1,z_2)}
\]
is analytic and has modulus strictly less than one since $|b| =
|\tilde{b}|$ on $\T^2$ and since $|a| > |b|$ on $\T^2$. In
particular, $a+z_1^r z_2^s \tilde{b}$ has no zeros on
$\overline{\D}^2$.

We may polarize formula \eqref{lurkisom1} and get for $z_1,z_2 \in \T$
\begin{equation} \label{lurkisom2}
p(z_1,z_2,z_3) \overline{p(z_1,z_2,\zeta_3)} - \tilde{p}(z_1,z_2,z_3)
\overline{\tilde{p} (z_1,z_2,\zeta_3)} = (1-z_3 \bar{\zeta}_3)
\|E(z_1,z_2)\|^2,
\end{equation}
for $z_3,\zeta_3 \in \C$, which we rearrange into
\[
\begin{aligned}
& p(z_1,z_2,z_3) \overline{p(z_1,z_2,\zeta_3)} + z_3 \bar{\zeta}_3
\|E(z_1,z_2)\|^2 \\
&= \tilde{p}(z_1,z_2,z_3)
\overline{\tilde{p} (z_1,z_2,\zeta_3)} + \|E(z_1,z_2)\|^2.
\end{aligned}
\]

Then, for fixed $z_1,z_2 \in \T$ and for varying $z_3$, the map
\begin{equation} \label{lurkisom33}
\begin{bmatrix} p(z_1,z_2,z_3) \\ z_3 E(z_1,z_2) \end{bmatrix}
\mapsto \begin{bmatrix} z_1^r z_2^s\tilde{p}(z_1,z_2,z_3)
  \\ E(z_1,z_2) \end{bmatrix}
\end{equation}
gives a well-defined isometry $V(z_1,z_2)$ (which depends on
$z_1,z_2$) from the span of the elements on the left to the span of
the elements on the right (the span taken over the above vectors as
$z_3$ varies).  
More concretely, by examining coefficients of $z_3$, we map
\begin{equation} \label{lurkisom4}
\begin{bmatrix} a(z_1,z_2) \\ \vec{0} \end{bmatrix}
\mapsto \begin{bmatrix} z_1^r z_2^s\tilde{b}(z_1,z_2)
  \\ E(z_1,z_2) \end{bmatrix}, \qquad \begin{bmatrix} b(z_1,z_2)
  \\ E(z_1,z_2) \end{bmatrix} \mapsto \begin{bmatrix} z_1^r z_2^s
  \tilde{a}(z_1,z_2) \\ \vec{0} \end{bmatrix}.
\end{equation}

This is how the ``lurking isometry argument'' traditionally works,
however $V(z_1,z_2)$ does not extend uniquely to define a unitary on
$\C^{N+1}$ and we would like to extend $V(z_1,z_2)$ so that $V$ is
rational in $z_1,z_2$.

The definition that Kummert gives in the $(1,1,1)$ case works here
with a small modification.

\begin{claim} 
Define
\[
V = \frac{1}{a} \begin{bmatrix} z_1^r z_2^s
  \tilde{b} & \tilde{E}^t \\ E &
  \frac{E\tilde{E}^t - a(z_1^r
    z_2^s\tilde{a}+b)I}{a+z_1^r
    z_2^s\tilde{b}}.
\end{bmatrix}
\]
Then, $V$ is holomorphic in $\D^2$ and unitary valued on $\T^2$; 
$V$ satisfies 
\begin{equation} \label{lurkisom3}
V(z_1,z_2) \begin{bmatrix} p(z_1,z_2,z_3) \\ z_3
  E(z_1,z_2) \end{bmatrix} = \begin{bmatrix} z_1^r
  z_2^s\tilde{p}(z_1,z_2,z_3) \\ E(z_1,z_2) \end{bmatrix}
\end{equation}
 for $(z_1,z_2) \in \T^2$ and hence for all $(z_1,z_2) \in \cD^2$ by
 analyticity.  (This is just the content of \eqref{lurkisom33}.)
\end{claim}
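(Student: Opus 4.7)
My plan is to verify the three assertions in turn: that $V$ is holomorphic on $\D^2$, that it satisfies the intertwining identity \eqref{lurkisom3}, and that it takes unitary values on $\T^2$.

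For holomorphy, the only possible poles of $V$ come from the denominators $a$ and $a + z_1^r z_2^s \tilde{b}$ appearing in the $(2,2)$ block. Since $p$ has no zeros on $\cD^3$, $a(z_1,z_2) = p(z_1,z_2,0)$ is nonvanishing on $\cD^2$; and the maximum principle argument already noted just before the claim gives $|\tilde{b}/a|<1$ on $\cD^2$, so $a + z_1^r z_2^s\tilde{b}$ is also nonvanishing there. Hence $V$ is holomorphic on a neighborhood of $\cD^2$.

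For \eqref{lurkisom3}, the key tool is the polynomial identity
\[
z_1^r z_2^s\tilde{b}\, b + \tilde{E}^t E = a\, z_1^r z_2^s \tilde{a},
\]
obtained by rewriting the scalar identity $\|E\|^2 = |a|^2 - |b|^2$ on $\T^2$ using the reflection relations $\bar{a} = z_1^{-n}z_2^{-m}\tilde{a}$, $\bar{b} = z_1^{-n}z_2^{-m}\tilde{b}$, $\bar{E} = z_1^{-n-r}z_2^{-m-s}\tilde{E}$ (valid on $\T^2$), then invoking the fact that polynomials agreeing on $\T^2$ agree everywhere. Since both sides of \eqref{lurkisom3} are affine in $z_3$, it suffices to verify the two assignments \eqref{lurkisom4}. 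The first is read off directly from the first column of $aV$. For the second, the top entry of $V\bigl(\begin{smallmatrix}b\\E\end{smallmatrix}\bigr)$ is $(z_1^r z_2^s\tilde{b} b + \tilde{E}^t E)/a = z_1^r z_2^s\tilde{a}$ by the identity, while the bottom entry collapses once one observes that $(E\tilde{E}^t - a(z_1^r z_2^s\tilde{a}+b)I)E = -(a + z_1^r z_2^s\tilde{b})bE$, which cancels both the denominator $a+z_1^r z_2^s\tilde{b}$ and the extra $Eb$.

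Unitarity on $\T^2$ is the main obstacle. I would compute $V^*V$ in the three blocks, making systematic use of the same reflection identities together with $\|E\|^2 = |a|^2 - |b|^2$. The $(1,1)$ entry comes out immediately to $(|\tilde{b}|^2 + \|E\|^2)/|a|^2 = (|b|^2 + |a|^2 - |b|^2)/|a|^2 = 1$. The $(1,2)$ row vanishes after one further application of the polynomial identity. The heart of the matter is the $(2,2)$ block reducing to $I_N$: the specific shape of the lower-right entry of $V$ --- a rank-one piece $E\tilde{E}^t$ offset by $-a(z_1^r z_2^s \tilde{a}+b)I$ and normalized by the additional factor $a + z_1^r z_2^s \tilde{b}$ --- is exactly what is needed to make this $N\times N$ block collapse to $I_N$, and the verification is essentially the same bookkeeping Kummert carries out in the $(1,1,1)$ case, now with $E$ promoted from a scalar to a vector.
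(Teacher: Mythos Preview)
Your treatment of holomorphy and of the intertwining identity \eqref{lurkisom3} matches the paper's proof essentially line for line: both reduce \eqref{lurkisom3} to the two assignments in \eqref{lurkisom4} via the polynomial identity $\tilde{E}^t E = z_1^r z_2^s(a\tilde{a}-b\tilde{b})$.

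The genuine difference is in how unitarity on $\T^2$ is established. You propose a direct block computation of $V^*V$, carrying out the $(1,1)$ and $(1,2)$ entries and then appealing to ``essentially the same bookkeeping Kummert carries out'' for the $N\times N$ lower-right block. That block is the crux, and you have not actually done it; while the computation can be pushed through (it uses $|\,z_1^r z_2^s\tilde a + b\,| = |\,a + z_1^r z_2^s\tilde b\,|$ on $\T^2$ together with $E\tilde E^t = z_1^{n+r}z_2^{m+s} EE^*$), it is genuinely messy for general $N$. The paper sidesteps this entirely with a structural argument: having shown via \eqref{lurkisom3} that $V(z_1,z_2)$ is isometric on the two-dimensional span $S(z_1,z_2)$ of the vectors $\bigl(\begin{smallmatrix}p\\z_3E\end{smallmatrix}\bigr)$, it observes that $S(z_1,z_2)^\perp = \{(0,v)^t : v\perp E(z_1,z_2)\}$, and that every such vector is an eigenvector of $V(z_1,z_2)$ with eigenvalue $-\dfrac{z_1^r z_2^s\tilde a + b}{a + z_1^r z_2^s\tilde b}$, which is unimodular on $\T^2$. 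Thus $V$ is isometric on both $S$ and $S^\perp$, hence unitary. This eigenvector observation is the clean replacement for your $(2,2)$-block computation, and it is what makes the argument scale effortlessly from Kummert's scalar $E$ to the vector case.
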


\begin{proof}[Proof of Claim]
First, $V$ is holomorphic since $a$ and $a+ z_1^r z_2^s\tilde{b}$ have
no zeros on $\cD^2$.

Let 
\[
S(z_1,z_2) = \text{span}\left\{\begin{bmatrix} p(z_1,z_2,z_3) \\ z_3
  E(z_1,z_2) \end{bmatrix}: z_3 \in \C\right\}
\]
and notice that 
\[
S(z_1,z_2) = \text{span}\left\{\begin{bmatrix} z_1^r
z_2^s\tilde{p}(z_1,z_2,z_3) \\ E(z_1,z_2) \end{bmatrix}: z_3 \in
\C\right\}.
\]
The goal is to show $V(z_1,z_2)$ is a unitary by verifying
\eqref{lurkisom3}, which shows $V(z_1,z_2)$ is isometric on the
subspace $S(z_1,z_2)$, and by showing $V(z_1,z_2)$ maps
$S(z_1,z_2)^{\perp}$ isometrically into itself.

To show \eqref{lurkisom3} we first observe that
\[
\begin{aligned}
\tilde{E}(z_1,z_2)^tE(z_1,z_2) &= z_1^{n+r}z_2^{m+s}\|E(z_1,z_2)\|^2 \\ &=
z_1^{n+r}z_2^{m+s}(|a(z_1,z_2)|^2 - |b(z_1,z_2)|^2) = z_1^{r} z_2^{s}( a
\tilde{a} - b\tilde{b}).
\end{aligned}
\]
where $a,b$ are reflected at degree $(n,m)$ to give $\tilde{a},
\tilde{b}$.

Here are the computations used to show \eqref{lurkisom4} (which is
equivalent to \eqref{lurkisom3}):
\[
V \begin{bmatrix} a \\ \vec{0} \end{bmatrix}
= \begin{bmatrix} z_1^r z_2^r \tilde{b} \\ E \end{bmatrix}
\]
and 
\[
V  \begin{bmatrix} b
  \\ E \end{bmatrix} = \frac{1}{a} \begin{bmatrix} z_1^r z_2^s \tilde{b} b +
z_1^rz_2^s(a\tilde{a} - b\tilde{b}) \\ \frac{ b(a+z_1^rz_2^s\tilde{b}) +
z_1^rz_2^s(a\tilde{a} - b\tilde{b}) - a(z_1^rz_2^s \tilde{a} + b)}{a+
  z_1^r z_2^s \tilde{b}} E \end{bmatrix}
 = \begin{bmatrix} z_1^r z_2^s
  \tilde{a} \\ \vec{0} \end{bmatrix}
\]

Now we show $V(z_1,z_2)$, viewed as a linear map, is isometric on the
orthogonal complement of $S(z_1,z_2)$.  Set $X(z_1,z_2)$ equal to the
orthogonal complement of $S(z_1,z_2)$ in $\C^{1+N}$, and observe that
\[
X(z_1,z_2) = \{ \begin{bmatrix} 0 \\ v \end{bmatrix}: v \perp E(z_1,z_2) \}.
\]
Notice that $v \perp E(z_1,z_2)$ if $\tilde{E}^t v = 0$. 

Let us observe what the definition of $V$ does to elements of $X$.
For $\vec{x} = \begin{bmatrix} 0 \\ v \end{bmatrix} \in X$, we have
\[
V\vec{x} = \begin{bmatrix} 0 \\ -\frac{z_1^r z_2^s \tilde{a} + b}{a +
    z_1^r z_2^s \tilde{b}} v \end{bmatrix} = -\frac{z_1^r z_2^s
  \tilde{a} + b}{a + z_1^r z_2^s \tilde{b}} \vec{x}.
\]
So, every element of $X$ is an eigenvector with eigenvalue
$-\frac{z_1^r z_2^s\tilde{a}+b}{a + z_1^rz_2^s\tilde{b}}$.  This
number is unimodular for $(z_1,z_2) \in \T^2$.  This proves
$V(z_1,z_2)$ is unitary valued and the claim is proved.
\end{proof}

This means $V$ is an $(N+1)\times (N+1)$ two-variable rational
matrix-valued inner function.  It was proved in \cite{aK89} (see also
\cite{BSV05}) that such functions have transfer function
representations.  Namely, there exists a $((N+1) + n_1 + n_2) \times
((N+1)+n_1+n_2)$ block unitary
\[
U = \begin{bmatrix} A & B \\ C & D \end{bmatrix} = \begin{bmatrix} A &
  B_1 & B_2 \\
C_1 & D_{11} & D_{12} \\
C_2 & D_{21} & D_{22} \end{bmatrix}
\]
where $A$ is an $(N+1)\times (N+1)$ matrix, $B$ is an $(N+1)\times
(n_1+n_2)$, $C$ is an $(n_1+n_2) \times (N+1)$, $D$ is an $(n_1+n_2)
\times (n_1+n_2)$ (all subdivided as indicated) such that 
\begin{equation} \label{transfer}
V(z_1,z_2)
= A + B d(z_1,z_2) (I - D d(z_1,z_2))^{-1} C
\end{equation}
 where
\[
d(z_1,z_2) = \begin{bmatrix} z_1I_1 & 0 \\ 0 & z_2 I_2 \end{bmatrix}.
\]
Here $I_1, I_2$ are the $n_1$, $n_2$-dimensional identity matrices,
respectively. 

Such a representation is equivalent to the formula
\begin{equation} \label{lurkisom5}
U \begin{bmatrix} I \\ z_1 G_1(z_1,z_2) \\ z_2 G_2(z_1,z_2) \end{bmatrix}
= \begin{bmatrix} V(z_1,z_2) \\ G_1(z_1,z_2) \\ G_2(z_1,z_2) \end{bmatrix}
\end{equation}
where $G_1, G_2$ are $\C^{n_1\times(N+1)}$, $\C^{n_2\times(N+1)}$ valued functions given by
\[
\begin{bmatrix} G_1(z_1,z_2) \\ G_2(z_1,z_2) \end{bmatrix} = (I-D
d(z_1,z_2))^{-1} C.  
\]
Indeed, one can use formula \eqref{lurkisom5} to explicitly solve for
$V$ as in \eqref{transfer}.

Define 
\[
Y = \begin{bmatrix} p \\ z_3 E\end{bmatrix} \text{ and } H_j = G_j Y
  \text{ for } j =1,2.
\]
Then, by these definitions
\[
U \begin{bmatrix} I \\ z_1 G_1 \\ z_2 G_2 \end{bmatrix} Y =
U \begin{bmatrix} Y \\ z_1 G_1 Y \\ z_2 G_2 Y \end{bmatrix} =
U \begin{bmatrix} p \\ z_3 E \\ z_1 H_1 \\ z_2 H_2 \end{bmatrix}
\]
and 
\[
\begin{bmatrix} V \\  G_1 \\  G_2 \end{bmatrix} Y = \begin{bmatrix} V
  Y \\ H_1 \\ H_2 \end{bmatrix} = \begin{bmatrix} z_1^r z_2^s\tilde{p}
  \\ E \\ H_1 \\ H_2 \end{bmatrix}
\]
where the last equality follows from \eqref{lurkisom3}.

By \eqref{lurkisom5} we now have
\[
U \begin{bmatrix} p \\ z_3 E \\ z_1 H_1 \\ z_2 H_2 \end{bmatrix}
= \begin{bmatrix} z_1^r z_2^s \tilde{p} \\ E \\ H_1 \\ H_2 \end{bmatrix}
\]
and since $U$ is a unitary we have
\[
\begin{aligned}
& |p|^2 + |z_3|^2\|E\|^2 + |z_1|^2\|H_1\|^2 + |z_2|^2\|H_2\|^2 \\
& = |z_1^r z_2^s \tilde{p}|^2 + \|E\|^2 + \|H_1\|^2 + \|H_2\|^2
\end{aligned}
\]
which can be rearranged to give
\[
|p|^2 - |z_1^r z_2^s \tilde{p}|^2 = \sum_{j=1,2} (1-|z_j|^2) \|H_j\|^2 +
(1-|z_3|^2) \|E\|^2
\]
Even though we have not verified that $H_1$ and $H_2$ are polynomials,
this is enough to prove $z_1^rz_2^s\tilde{p}/p$ is in the Schur-Agler
class.  In fact, Theorem \ref{bounds} forces $\|H_1\|^2$, $\|H_2\|^2$
to be sums of squares of polynomials of multi-degree $(n+r-1,m+s,1)$,
$(n+r,m+s-1,1)$.

In the case $m=1$, we have $r=s=0$, $E$ of degree $(n,1,0)$, and $E$
has values in $\C^2$. So, $\|E\|^2$ is a sum of two squares, and by
dimensional considerations $\|H_1\|^2$ is a sum of at most $4(n-1)$
squares and $\|H_2\|^2$ is a sum of at most $2(n+1)$ squares.
This concludes the proof of Theorems \ref{thm1} and \ref{thm2}.

\section{How to construct examples}
How might one construct an example of a rational inner function
$\tilde{p}/p$ with $p$ of degree $(n,m,1)$ and the property that
$\tilde{p}/p$ is not in the Schur-Agler class while $z_1^rz_2^s
\tilde{p}/p$ is? Examining the above proof, it is a matter of choosing
$a, b \in \C[z_1,z_2]$ such that (1) $a$ has no zeros on $\D^2$, (2)
$|a|^2-|b|^2\geq 0$ on $\T^2$, and (3) $|a|^2-|b|^2$ is not a sum of
squares of polynomials of degree $(n,m)$.  Positive trigonometric
polynomials of degree $(n,m)$ which cannot be written as a sum of
squares of polynomials of degree $(n,m)$ do exist (see \cite{mD04} and
\cite{bD07}), so our problem reduces to finding such a trigonometric
polynomial of the form $|a|^2-|b|^2$.  We leave this for future work
(or future authors).

\bibliography{sacrifott}

\end{document}